\definecolor{burntorange}{rgb}{0.8, 0.33, 0.0}
\theoremstyle{plain}
\newtheorem{theorem}{Theorem}[section]
\newtheorem{thm}[theorem]{Theorem}
\newtheorem{corollary}[theorem]{Corollary}
\newtheorem{prop}[theorem]{Proposition}
\theoremstyle{definition}
\newtheorem{defn}[theorem]{Definition}
\theoremstyle{remark}
\newtheorem{rem}[theorem]{Remark}
\newcommand{\R}{\mathbb{R}}
\newcommand{\C}{\mathbb{C}}
\newcommand{\HH}{\mathcal{H}}
\newcommand{\T}{\mathbb{T}}
\newcommand{\D}{\mathbb{D}}
\newcommand{\N}{\mathbb{N}}
\DeclareMathOperator*{\Span}{span}
\newcommand{\cB}{\mathcal{B}}
\begin{document}
\title[Convergence and cyclicity]{Convergence and preservation of cyclicity}
\author[Aguilera]{Alejandra Aguilera}

\address{Alejandra Aguilera \newline Departamento de Matem\'atica, Universidad de Buenos Aires, \newline Instituto de Matem\'atica ``Luis Santal\'o'' (IMAS-CONICET-UBA), \newline Buenos Aires, Argentina} \email{aaguilera@dm.uba.ar}

\author[Seco]{Daniel Seco}
\address{Daniel Seco \newline  Universidad de la Laguna, Universidad Carlos III de Madrid and Instituto de Ciencias Matem\'aticas (CSIC-UAM-UC3M-UCM)  \newline  Avenida Astrof\'isico Francisco S\'anchez, s/n.  \newline Facultad de Ciencias, secci\'on: Matem\'aticas, apdo. 456.  \newline 38200 San Crist\'obal de La Laguna \newline
Santa Cruz de Tenerife,  Spain} \email{dsecofor@ull.edu.es}

\begin{abstract}
We prove that the set of cyclic (resp., non-cyclic) functions in Dirichlet type spaces $D_{\alpha}$ is not closed in the topology induced by the norm. We also show that some additional conditions on a convergent sequence of cyclic functions $\{f_n\}$ force cyclicity of the limit $f$. One of these conditions is the existence of a uniform bound on the multiplier norm of $f/f_n$. We show counterexamples satisfying all the same conditions except that $f/f_n$ are multipliers without uniform norm bounds. Then we find precise estimates for the distance between the corresponding optimal polynomial approximants of each degree $d$ ($\{p_{n,d}\}$ and $p_d$ associated to the sequence $\{f_n\}$ and to $f$), and show how some of the constants providing a control of $\|p_{n,d}-p_d\|$ blow up as the choice of $f$ moves within $D_\alpha$. 
\end{abstract}

\thanks{This project has received funding: from the European Union's Horizon 2020 research and innovation programme under the Marie Sk\l{}odowska-Curie grant agreement No. 777822; from the Spanish Ministry of Economy and Competitiveness, through the “Severo Ochoa Programme for Centers of Excellence in R$\&$D” (CEX2019-000904-S); through grant PID2019-106433GB-I00, from Agencia Estatal de Investigaci\'on, Spain, MCIN/AEI/10.13039/501100011033; and from the Madrid Government (Comunidad de Madrid-Spain) under the Multiannual Agreement with UC3M in the line of Excellence of University Professors
(EPUC3M23), and in the context of the V PRICIT (Regional Programme of Research and Technological Innovation).
}

\subjclass{Primary 47A16; Secondary 30H99.}

\keywords{Optimal polynomial approximants, Dirichlet spaces, invariant subspaces}

\date{\today}

\maketitle

\section{Introduction}

Let $\mathcal{H}$ be a Hilbert space of analytic functions where polynomials are dense.
	\begin{defn}\label{def1}
		A function $f\in \mathcal{H}$ is \emph{cyclic} if 
		\[[f]_{\mathcal{H}} := \overline{ \Span\{ z^{k}f:\, k\in\N \} } = \mathcal{H}.\]	\end{defn}
	
	 Since the function $1$ is cyclic, one can easily find an equivalent condition for cyclicity of a function: $f\in \mathcal{H}$ is cyclic when there exists a sequence of polynomials $\{p_{n}\}_{n=0}^{\infty}$ such that 
	    \begin{equation}\label{eqn2001}\|p_{n}f - 1\|_{\alpha} \to 0 \quad \text{as }n\to\infty.\end{equation}

Cyclic functions play an important role in complex analysis since knowing that a function is cyclic may allow one to reduce the proof that some property $P$ holds for all elements of an infinite-dimensional space by showing, instead, that $P$ is preserved by convergence, it is linear and it holds for a cyclic function. Recently, some applications of cyclic functions in signal theory have been studied \cite{BenCen,SargentSola}.

Characterizing all cyclic functions in a space is usually a difficult problem. Thus, in several attempts to do so, different authors have started by describing cyclic \emph{polynomials} with the hope of extending later the conclusions to more general functions \cite{BS84,TAMS16}. In this sense, one could be tempted to think that cyclicity (or lack thereof) should be preserved by norm convergence. In this article, we will start by showing that this is \emph{not the case} in a relevant family of function spaces called \emph{Dirichlet-type spaces}, denoted $D_\alpha$ ($\alpha \in \R$). We consider this result in the case $\alpha=0$ (corresponding to the classical Hardy space $H^2$) to be known to experts. These spaces are appreciated by researchers in complex analysis and operator theory for the diversity of their features despite their one-parameter description. More precisely, non-cyclicity is preserved only in the trivial case $\alpha>1$, where a simple characterization of cyclic functions is available. These results also seem to be folklore (indeed, some of them are implicit in \cite{BS84}), and their proofs are mostly simple, but our personal experience tells us that the conclusions obtained are contrary to what some audiences seem to expect.

An approach for the study of cyclicity based on the so called \emph{optimal polynomial approximants} (or o.p.a.) associated to a function $f$ was developed in \cite{BCLSS15}. To each function $f \in D_\alpha$, one associates a sequence of polynomials $\{p_d: d  \in \N \}$, each $p_d$ of the corresponding degree $d$. Suppose $\{f_n\}$ is a sequence of elements of $\mathcal{H}$ converging to $f$ in the norm of the space. Although the cyclicity of $\{f_n\}$ do not determine that $f$ has the same property, we will show that some features related to the study of o.p.a. are preserved in some way by convergence. 

One such specific feature arises from two basic properties of o.p.a.: one is that the o.p.a. of all degrees associated to each function $f$, together with the value $f(0)$, characterize the cyclicity of $f$; the other property is that each of the o.p.a. is determined by a corresponding linear system. If we denote by $p_{d}$ the o.p.a. of degree $d$ associated to $f$ and by $M$, $c$, $b$ the matrix, unknown and independent term of the corresponding linear system, and for each $n\in\N$ we denote by $p_{n,d}$, $M_n$, $c_n$, $b_n$,  the analogues for $f_n$, we will prove that $M_n$ converges to $M$, $c_n$ to $c$, $b_n$ to $b$, and $p_{n,d}$ to $p_{d}$, all of them, at a speed \emph{in terms of $n$} that is comparable to the rate at which $\|f_n-f\|_{\mathcal{H}}$ converges to 0, although the comparison constants $C(d,\alpha,f)$.

If one finds uniform bounds for $C(d, \alpha, f)$ of the form $C'(d, \alpha)$, it is possible to extrapolate cyclic properties to the limit function. However we were able to show that, unfortunately, the constants of comparison are different for each $f$ and are not uniformly bounded.

We will begin in Section \ref{Sect2} by presenting some preliminaries on the spaces we will study, making more precise the definitions needed. Then, in Section \ref{Sect3}, we will provide some very strong assumptions under which a sequence of cyclic functions do indeed converge to a cyclic function, as well as the folklore result, which consist of  counterexamples on how the additional hypothesis cannot be ignored for the preservation of cyclicity. This is presented in Theorem \ref{cyclic_limit_alpha>1} below. Then, we do a similar study of the question on the preservation of non-cyclicity, in Proposition \ref{thm32}, but here the answer depends on the space we look at. We will dedicate Section \ref{sect4} to the study of o.p.a. and their convergence. With the notation above, our main result is Theorem \ref{thm1a} below, which gives quantitative estimates on the norms of $b_n-b$, $M_n-M$ and $p_{n,d}-p_d$ in terms of those of $f_n-f$. Then we clarify how the corresponding constants (for fixed $f$) may blow up as $f$ moves through a Dirichlet-type space. This is done in Proposition \ref{propo45}.

	\section{Preliminaries}\label{Sect2}
	\subsection{General properties of Dirichlet-type spaces}
	For $\alpha \in \R$, we consider the Hilbert space $D_{\alpha}$ of all analytic functions $f$ on $\D$ satisfying
	$$\|f\|^2_\alpha := \sum_{k=0}^{\infty} |a_{k}|^{2} (k+1)^{\alpha} < \infty$$
	where $f(z)= \sum_{k=0}^{\infty} a_{k} z^{k}$, $z\in\D$.
	The inner product in $D_{\alpha}$ is defined by
	$$\langle f,g\rangle_{\alpha} := \sum_{k=0}^{\infty} a_{k} \overline{b_{k}} (k+1)^{\alpha}$$
	with
	$g(z) = \sum_{k=0}^{\infty} b_{k} z^{k}$. These are called \emph{Dirichlet-type spaces} and from their definition, it is easily seen that $D_{\alpha}\subseteq D_{\beta}$ for $\alpha \geq \beta$ and that $f \in D_\alpha$ if and only if $f' \in D_{\alpha-2}$, but apart from that, these spaces have very different behavior in many aspects, such as their boundary behavior.
	
	In the cases when $\alpha=-1,0,1$, $D_{\alpha}$ correspond, respectively, to the Bergman $A^{2}$, Hardy $H^{2}$ and Dirichlet $D$ spaces. We refer the reader to \cite{DurSchu, Ga81, EKMR14} for more on these classical and widely studied objects. 
	For $\alpha>1$, $D_{\alpha}$ is a subalgebra of the disk algebra, that is, the set $\mathcal{A}(\D)$ of all functions holomorphic in $\D$ and continuous on $\overline{\D}$ . 
		
	A function $\phi \in D_\alpha$ is called a {\it multiplier} (on $D_{\alpha}$) if $\phi D_{\alpha} \subset D_{\alpha}$, that is, if the {\it multiplication operator}  $M_{\phi}: D_{\alpha} \to D_{\alpha}$ given by $M_{\phi}f= \phi f$ is well-defined and bounded (see \cite{Ta66}). The set of all multipliers on $D_{\alpha}$ is denoted by $M(D_{\alpha})$. We have that for $\alpha \leq 0$, $M(D_{\alpha}) = H^{\infty}$, and for $\alpha> 1$, $M(D_{\alpha})=D_{\alpha}$. Since $1\in D_{\alpha}$, any multiplier of $D_{\alpha}$ is an element of $D_{\alpha}$. We define a norm on the multiplier space, by assigning $\phi$ the operator norm of $M_{\phi}$, which we denote $\|\phi\|_{M(D_\alpha)}$.  In this way $M(D_\alpha)$ becomes a Banach space.
	
	The {\it shift operator} $S:D_{\alpha} \rightarrow D_{\alpha}$ is the bounded operator defined by $Sf(z) = zf(z)$, i.e., $S$ is the multiplication operator associated to the function $\phi(z)=z$.
	\begin{rem}\label{rem001}
 	Observe from the definition of the spaces that if $\alpha =0$, then $S$ is an isometry; for $\alpha < 0$, $S$ is a contractive operator with norm equal to $1$; and for $\alpha>0$ one has $\|S\| = 2^{\alpha/2}$.
	\end{rem}
 	
 	We are interested in discussing some aspects about cyclic functions for the shift operator in $D_{\alpha}$. We recall that given a Hilbert space $\HH$ and a linear bounded operator $T:\HH\to \HH$, a vector $v\in \HH$ is said to be {\it cyclic} for $T$ if and only if
 	\[\HH = \overline{\Span\{T^{k}v:k\in\N\}}. \]

Notice that Definition \ref{def1}	simply says that $f$ is cyclic if it is cyclic \emph{for the shift operator}.

It is well known that $D_{\alpha}$ are examples of \emph{reproducing kernel Hilbert spaces} with kernel
	$$k_{w}(z) = \sum_{k=0}^{\infty} \frac{\overline{w}^{k} z^{k}}{(k+1)^{\alpha}}.$$ 
	This is to say that  for all $w\in \D$ the function  $k_w$ has the special property that $f(w) = \langle f, k_{w} \rangle_{\alpha}$ for any $f \in D_\alpha$.
	
	\begin{rem} 
	   Observe that since $D_{\alpha}$ is a reproducing kernel Hilbert space, the point evaluation functionals are bounded. Also, one can see from the definition of the spaces and \eqref{eqn2001} that if a function $f$ is cyclic for $D_{\alpha}$, then $f$ is cyclic for $D_{\beta}$ for all $\beta \leq \alpha$.
	\end{rem}

	 When $\alpha>1$, any function in $D_\alpha$ is automatically continuous to the boundary $\T$ and this allows one to show the following complete characterization of cyclic functions available, due to Brown-Shields \cite{BS84}:
	 
	\begin{prop}\label{cyclic_alpha>1}
		For $\alpha>1$, $f$ is cyclic for $D_{\alpha}$ if and only if $f$ has no zeros in the closed unit disc, that is, if there exists a constant $c$ such that
		\begin{equation}\label{cond_ciclycity}
		   |f(z)| > c > 0, \qquad \text{ for all } z\in\mathbb{D}.
		\end{equation}
	\end{prop}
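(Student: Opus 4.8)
The plan is to prove both implications, reducing the sufficiency to the \emph{inverse-closedness} of $D_\alpha$: namely, that if $f\in D_\alpha$ does not vanish anywhere on $\overline{\D}$, then $1/f$ again belongs to $D_\alpha$. Granting this, cyclicity follows quickly, since for $\alpha>1$ we have $M(D_\alpha)=D_\alpha$, so $f$ is a multiplier, and polynomials are dense in $D_\alpha$, so we may choose polynomials $p_n\to 1/f$ in $\|\cdot\|_\alpha$. Then
\[
\|p_n f-1\|_\alpha=\|(p_n-1/f)f\|_\alpha\le\|f\|_{M(D_\alpha)}\,\|p_n-1/f\|_\alpha\to 0,
\]
so $f$ satisfies \eqref{eqn2001} and is cyclic. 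Thus the entire weight of the sufficiency rests on inverse-closedness, which I expect to be the main obstacle.

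For the necessity I would argue by contraposition, using that for $\alpha>1$ point evaluation is bounded at \emph{every} point of the closed disc. Indeed, since $\sum_k(k+1)^{-\alpha}<\infty$, for any $w\in\overline{\D}$ the function $k_w(z)=\sum_k\overline{w}^kz^k(k+1)^{-\alpha}$ lies in $D_\alpha$ with $\|k_w\|_\alpha^2=\sum_k|w|^{2k}(k+1)^{-\alpha}<\infty$, and $\langle f,k_w\rangle_\alpha=f(w)$ (the series $\sum_k a_kw^k$ converging absolutely because $\sum_k|a_k|<\infty$ by Cauchy--Schwarz). Hence if $f(w)=0$ for some $w\in\overline{\D}$, then $(z^kf)(w)=w^kf(w)=0$ for all $k$, so the bounded functional $g\mapsto g(w)$ vanishes on $\Span\{z^kf\}$ and therefore on its closure $[f]_{D_\alpha}$; since it does not vanish at the constant $1$, we get $1\notin[f]_{D_\alpha}$ and $f$ is not cyclic. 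Finally, by compactness of $\overline{\D}$ and continuity of $|f|$, ``no zeros in $\overline{\D}$'' is equivalent to $|f|\ge c>0$ there, which yields the stated inequality on $\D$.

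It remains to establish inverse-closedness, where the real work lies. My plan is to exhibit $D_\alpha$ as a unital commutative Banach algebra whose maximal ideal space is exactly $\overline{\D}$, with characters the evaluations $w\in\overline{\D}$; then $f$ nonvanishing on $\overline{\D}$ means $f(w)\neq 0$ for every character, so $f$ is invertible and $1/f\in D_\alpha$. The algebra structure comes from $M(D_\alpha)=D_\alpha$ (equip $D_\alpha$ with the equivalent multiplier norm, making it a Banach algebra with unit $1$). To identify the characters, let $\chi$ be one and set $w=\chi(z)$; from $\|z^n\|_{M(D_\alpha)}=(n+1)^{\alpha/2}$ and $|\chi(z^n)|=|w|^n\le\|z^n\|_{M(D_\alpha)}$ one gets $|w|\le\lim_n(n+1)^{\alpha/(2n)}=1$, so $w\in\overline{\D}$. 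Since the Taylor partial sums of any $f\in D_\alpha$ converge to $f$ in $\|\cdot\|_\alpha$, and hence (by norm equivalence) in multiplier norm, continuity and multiplicativity of $\chi$ give $\chi(f)=\sum_k a_kw^k=f(w)$, the series converging because $w\in\overline{\D}$ and $\sum_k|a_k|<\infty$. Thus every character is an evaluation at a point of $\overline{\D}$, and conversely each such evaluation is a character by the boundedness established above, so $\sigma(f)=f(\overline{\D})$; nonvanishing gives $0\notin\sigma(f)$, i.e.\ invertibility. The points I would need to pin down are the norm equivalence between $\|\cdot\|_\alpha$ and $\|\cdot\|_{M(D_\alpha)}$ (via the closed graph theorem, using $M(D_\alpha)=D_\alpha$) and the convergence of Taylor sums in multiplier norm; these are precisely what make the maximal-ideal-space identification, and hence inverse-closedness, go through. (For the range $1<\alpha\le 2$ one can instead argue directly, writing $(1/f)'=-f'/f^2$ and using $f\in D_\alpha\iff f'\in D_{\alpha-2}$ together with $1/f^2\in H^\infty=M(D_{\alpha-2})$; I would fall back on the Banach-algebra argument to cover all $\alpha>1$ uniformly.)
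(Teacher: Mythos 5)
Your argument is correct. Note first that the paper itself offers no proof of this proposition: it is stated as a known result and attributed to Brown--Shields \cite{BS84}, so there is no in-paper argument to compare against. Your route is essentially the standard one from the literature. The necessity half (bounded point evaluation at every $w\in\overline{\D}$ when $\sum_k (k+1)^{-\alpha}<\infty$, hence a zero of $f$ in $\overline{\D}$ forces $1\notin[f]$) is exactly right, and the reduction of sufficiency to inverse-closedness via $M(D_\alpha)=D_\alpha$ and density of polynomials is clean. The Gelfand-theoretic identification of the maximal ideal space of $(D_\alpha,\|\cdot\|_{M(D_\alpha)})$ with $\overline{\D}$ is the standard way to get $1/f\in D_\alpha$, and your verification of the two technical points it rests on (norm equivalence via the closed graph theorem, convergence of Taylor partial sums in the multiplier norm, plus $\|S^n\|=(n+1)^{\alpha/2}$ to confine characters to $\overline{\D}$) is sound; the direct derivative argument you sketch for $1<\alpha\le 2$ using $f'\in D_{\alpha-2}$ and $1/f^2\in H^\infty=M(D_{\alpha-2})$ is also valid in that range. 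I see no gap.
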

	
	\begin{rem}\label{cyclicity}
	For $\alpha \leq 1$,  boundary values are not necessarily well defined but if they are, then \eqref{cond_ciclycity} is sufficient for $f$ to be cyclic for $D_{\alpha}$ (although not necessary). A necessary but not sufficient condition is that $f$ has no zeros in $\D$. 
	\end{rem}

\subsection{Carleson sets, capacity and cyclic functions}\label{carleson_capacity}
	We now present some families of geometric measure theory concepts, strongly tied to $D_\alpha$ spaces. For a wider exposition on these topics we refer to \cite{EKMR14}. 
	
	A \emph{Carleson set} is a closed subset $E$ of $\T$ such that
	$$\int_{\T} \log\left( \frac{1}{{\rm dist}(\zeta,E)} \right)\, |d\zeta| < \infty.$$
	
	Equivalently, a set $E\subset \T$ is a Carleson set if and only if $|E|=0$ and 
	\begin{equation}\label{carleson_set}
	    \sum_{k}|I_{k}| \log(1/|I_{k}|)<\infty,
	\end{equation}
	where $\{I_{k}\}$ are the connected components of $\T \setminus E$ (each $I_k$ being an interval).
	
Let $K_\alpha: (0,\infty)\to [0,\infty)$ be the function given by $K_\alpha(t) = t^{\alpha-1}$, when $0 < \alpha <1$. We refer to $K_\alpha$ as the Riesz kernel. For $\alpha =1$, $K_1(t) = \max\{\log(2/t),0\}$, is the \emph{logarithmic kernel}. For a compact subset $E$ of $\T$, we define its \emph{$\alpha$-capacity} $c_\alpha$ ($\alpha$-Riesz capacity or logarithmic capacity, respectively) as
\[c_{\alpha}(E) = 1/ \inf\left\{\int_{\T} \int_{\T} K_{\alpha}(|x-y|)\,d\mu(x)\,d\mu(y)\right\},\]
    where the infimum is taken over the set of all Borel probability measures on $E$.
This concept satisfies some basic properties as mapping the empty set to zero, being monotone and finitely subaditive \cite[Theorem 2.1.3]{EKMR14}. In order to apply later some results on $c_{\alpha}$ from \cite{EKMR14}, we remark that our $D_\alpha$ spaces appear as $D_{1-\alpha}$ in the notation there (since we focus on the description of spaces in terms of coefficients rather than in terms of measures).

For our purposes, we say that $E$ is a \emph{Cantor set} if it is a compact subset of the unit circle $\T$ defined through its components as follows: if $\{E_{n}^{j} : 1\leq j \leq 2^{n},\,n\geq 0\}$ are non-empty compact subsets of $\T$, such that for $j=1,...,2^{n}$ the sets $E_{n}^{j}$ are pairwise disjoints and each set $E_{n-1}^{i}$ contains precisely two sets $E_{n}^{j}$, then \[E=\bigcup_{n=0}^{\infty} \bigcap_{j=1}^{2^{n}} E_{n}^{j}.\]

With this in mind, we can write \cite[Theorem 2.3.5]{EKMR14} as follows:
\begin{thm}\label{positive_capacity}
    Let $E$ the cantor set corresponding to $\{E_{n}^{j}\}$. Then,
    \begin{equation}\label{eqn2002}
        \sum_{n=0}^{\infty} \frac{K_\alpha(d_{n})}{2^{n+1}} 
        \leq \frac{1}{c_{\alpha}(E)} 
        \leq \sum_{n=0}^{\infty} \frac{K_\alpha(e_{n})}{2^{n+1}},
        \end{equation}
        where
        \begin{align*}
         d_{n}&=\max\{{\rm diam}(E_{n}^{j}) : 1\leq j\leq 2^{n}\} \\
         e_{n} &= \min\{d(E_{n+1}^{j},E_{n+1}^{k}): E_{n+1}^{j},E_{n+1}^{k}\subset E_{n}^{i} \text{ for } j\neq k\}.
        \end{align*}
        In particular, if the right-hand side in \eqref{eqn2002} converges, then $c_{\alpha}(E)>0$. 
    \end{thm}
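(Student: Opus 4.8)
The plan is to work directly with the \emph{energy} $I_\alpha(\mu):=\int_\T\int_\T K_\alpha(|x-y|)\,d\mu(x)\,d\mu(y)$, since by definition $1/c_\alpha(E)=\inf_\mu I_\alpha(\mu)$, the infimum being over all Borel probability measures $\mu$ supported on $E$. I would prove the right-hand inequality by exhibiting one convenient measure, and the left-hand inequality by a bound valid for every competitor $\mu$. Everything rests on two elementary observations. First, $K_\alpha$ is decreasing on $(0,\infty)$: for $0<\alpha<1$ the exponent $\alpha-1$ is negative, and for $\alpha=1$ the function $\log(2/t)$ decreases; moreover $K_\alpha(t)\to\infty$ as $t\to0$ in both cases. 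Second, any two distinct points $x,y\in E$ belong to a common set $E_n^i$ for every level up to some last one and then split into two distinct children; I call that index $n(x,y)$ their \emph{separation level} and write $A_n=\{(x,y):n(x,y)=n\}$. The sets $A_n$ partition $(E\times E)\setminus\Delta$, where $\Delta$ is the diagonal, so whenever $\mu$ is non-atomic I may split the energy as $I_\alpha(\mu)=\sum_{n\geq0}\int\!\!\int_{A_n}K_\alpha(|x-y|)\,d\mu\,d\mu$. If $\mu$ carries an atom the energy is infinite and both inequalities are automatic, so I may assume $\mu$ non-atomic throughout.

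For the upper bound I would take the natural Cantor measure $\mu^{\ast}$ determined by $\mu^{\ast}(E_n^j)=2^{-n}$ for all $n,j$. On $A_n$ the points $x,y$ lie in distinct sibling children inside some $E_n^i$, so $|x-y|\geq e_n$ and hence $K_\alpha(|x-y|)\leq K_\alpha(e_n)$. Counting mass gives $(\mu^{\ast}\times\mu^{\ast})(A_n)=2^{-n}-2^{-(n+1)}=2^{-(n+1)}$, and feeding this into the decomposition yields $1/c_\alpha(E)\leq I_\alpha(\mu^{\ast})\leq\sum_{n\geq0}K_\alpha(e_n)/2^{n+1}$.

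For the lower bound I would fix an arbitrary non-atomic probability measure $\mu$ on $E$, set $a_n^i=\mu(E_n^i)$ and $q_n=\sum_{i=1}^{2^n}(a_n^i)^2$. On $A_n$ one has $|x-y|\leq\operatorname{diam}(E_n^i)\leq d_n$, hence $K_\alpha(|x-y|)\geq K_\alpha(d_n)$, while $(\mu\times\mu)(A_n)=q_n-q_{n+1}$; therefore $I_\alpha(\mu)\geq\sum_{n\geq0}K_\alpha(d_n)(q_n-q_{n+1})$. The hard part is precisely here: a concentrated $\mu$ can make an individual increment $q_n-q_{n+1}$ tiny, so no term-by-term comparison with $2^{-(n+1)}$ is available. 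The way around it is the Cauchy--Schwarz estimate $q_n=\sum_i(a_n^i)^2\geq(\sum_i a_n^i)^2/2^n=2^{-n}$ combined with summation by parts. Putting $\kappa_n=K_\alpha(d_n)$, which is nondecreasing since $d_n$ decreases and $K_\alpha$ decreases, and $\Delta_m=\kappa_m-\kappa_{m-1}\geq0$ (with $\kappa_{-1}:=0$), a rearrangement of non-negative terms yields $\sum_n\kappa_n(q_n-q_{n+1})=\sum_m\Delta_m q_m$, the boundary contribution dropping because $q_\infty=0$ for non-atomic $\mu$; the same identity applied to the geometric weights gives $\sum_n\kappa_n/2^{n+1}=\sum_m\Delta_m 2^{-m}$. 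Since $\Delta_m\geq0$ and $q_m\geq2^{-m}$, the former sum dominates the latter, so $I_\alpha(\mu)\geq\sum_{n\geq0}K_\alpha(d_n)/2^{n+1}$; taking the infimum over $\mu$ gives the left-hand inequality. Finally, the ``in particular'' clause is immediate, since convergence of the right-hand side of \eqref{eqn2002} forces $1/c_\alpha(E)<\infty$, that is $c_\alpha(E)>0$.
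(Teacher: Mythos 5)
First, a point of order: the paper does not prove this statement at all --- it is quoted from [EKMR14, Theorem 2.3.5] --- so there is no in-paper argument to measure yours against; what you have written is essentially the standard energy proof from that reference. Your framework is right: both inequalities come from $1/c_\alpha(E)=\inf_\mu I_\alpha(\mu)$ by stratifying pairs of points according to the level at which they separate. The upper bound is complete: for the balanced measure $\mu^{\ast}$ the pairs lying in a common $E_n^i$ for \emph{every} $n$ have $\mu^{\ast}\times\mu^{\ast}$-measure at most $\lim_n 2^{-n}=0$, so the sets $A_n$ exhaust the energy integral, and the counts $(\mu^{\ast}\times\mu^{\ast})(A_n)=2^{-(n+1)}$ and the estimate $|x-y|\ge e_n$ on $A_n$ are correct.

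The lower bound has one genuine gap: you assert that every pair of distinct points of $E$ separates at a finite level, and correspondingly that $q_\infty:=\lim_n q_n=0$ for non-atomic $\mu$. With the definition of Cantor set used in the paper (the $E_n^j$ are merely disjoint compacts, and nothing forces $d_n\to 0$), both assertions can fail: a branch $\bigcap_n E_n^{i_n}$ may be a continuum of positive diameter carrying positive non-atomic mass, in which case $q_\infty>0$ and your summation by parts only yields $\sum_m\Delta_m(q_m-q_\infty)$, which need not dominate $\sum_m\Delta_m 2^{-m}$ (take $\mu$ supported on a single such branch: then $q_n\equiv 1$ and your estimate degenerates to $I_\alpha(\mu)\ge 0$). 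The inequality itself survives, because the never-separating off-diagonal pairs carry mass $q_\infty$ and satisfy $|x-y|\le\lim_n d_n$, hence contribute at least $(\lim_n\kappa_n)\,q_\infty=\sum_m\Delta_m\,q_\infty$ to the energy; adding this to your telescoped sum restores $I_\alpha(\mu)\ge\sum_m\Delta_m q_m\ge\sum_m\Delta_m 2^{-m}$. So either include that extra term, or state explicitly the hypothesis $d_n\to 0$ --- which does hold for the middle-$\varepsilon$ Cantor sets the paper actually uses, and under which your argument is complete as written.
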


    \section{Qualitative results}\label{Sect3}
	From now on, we denote by $Z(g)$ the set of zeros of a function $g$ in its domain. Given a sequence $\{f_{n}\}_{n=1}^{\infty} \subset D_{\alpha}$ of cyclic functions that converges to a non-zero function $f\in D_{\alpha}$, we investigate if $f$ must be cyclic too. The following theorem gives an affirmative answer under additional hypothesis and counterexamples otherwise.
	
	\begin{thm}\label{cyclic_limit_alpha>1}
		Let $\alpha\in \R$ and let $\{f_{n}\}_{n=1}^{\infty}$ be a sequence of cyclic functions for $D_{\alpha}$ converging to $f$ in the norm of $D_{\alpha}$. Assume that the following conditions hold:
		\begin{enumerate}
			\item\label{f_multiplier} there exists $C>0$ such that $\|f/f_{n}\|_{M(D_{\alpha})} \leq C$ for all $n\in\N$,
			\item\label{1/fn_bounded} $\|1-f/f_{n}\|_{\alpha} \rightarrow 0$ as $n$ tends to $\infty$. 
		\end{enumerate}
		Then, $f$ is cyclic in $D_{\alpha}$.
		
		Moreover, for each $\alpha \in \R$, there exists a non-cyclic function $f \in D_\alpha$ such that $Z(f)\cap \D = \emptyset$ and a sequence of cyclic functions $\{f_n\}$ converging to $f$ in $D_\alpha$, satisfying (b) and with each $f/f_n$ being a multiplier.
	\end{thm}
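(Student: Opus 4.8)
The plan is to directly produce the approximating sequence from Definition's equation~\eqref{eqn2001}. Since each $f_n$ is cyclic, there exist polynomials $q_{n,k}$ with $\|q_{n,k}f_n - 1\|_\alpha \to 0$ as $k\to\infty$. The natural candidate to witness cyclicity of $f$ is to write $q f = (q f_n)(f/f_n)$ and exploit that $f/f_n$ is a bounded multiplier. Concretely, I would estimate
\[
\|q_{n,k} f - 1\|_\alpha \leq \|q_{n,k} f - q_{n,k} f_n\|_\alpha + \|q_{n,k} f_n - 1\|_\alpha,
\]
but this is awkward because $\|q_{n,k}(f-f_n)\|_\alpha$ is not controlled. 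Instead, the cleaner route is to use hypothesis~\eqref{f_multiplier} to transport approximants: if $q_{n,k} f_n \to 1$, then multiplying by $f/f_n \in M(D_\alpha)$ gives $q_{n,k} f = q_{n,k} f_n \cdot (f/f_n) \to f/f_n$ in norm, so that $f/f_n \in [f]_\alpha$. Thus $1 = (1 - f/f_n) + f/f_n$, where the second summand lies in $[f]_\alpha$ and the first has norm tending to $0$ by hypothesis~\eqref{1/fn_bounded}. Choosing $n$ large forces $1$ into the closed subspace $[f]_\alpha$, hence $f$ is cyclic.

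**A technical point.** The main thing to verify carefully is that $q_{n,k} f_n \cdot (f/f_n) \to 1 \cdot (f/f_n)$ in $D_\alpha$, which follows from boundedness of the multiplier: $\|(q_{n,k}f_n - 1)(f/f_n)\|_\alpha \leq \|f/f_n\|_{M(D_\alpha)} \|q_{n,k}f_n - 1\|_\alpha \leq C\|q_{n,k}f_n-1\|_\alpha \to 0$. This shows $f/f_n \in [f]_\alpha$ for \emph{each fixed} $n$, which is what is needed; the uniform bound $C$ is not even essential here, only the finiteness of each individual multiplier norm together with hypothesis~\eqref{1/fn_bounded} to close the argument.

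**Counterexamples.** For the ``moreover'' claim I would construct, for each relevant $\alpha$, a sequence of cyclic functions converging to a function $f$ with no zeros in $\D$ (so that $Z(f)\cap\D=\emptyset$) which nonetheless fails to be cyclic. The natural source of non-cyclic functions with zeros only on $\T$ comes from the boundary theory sketched in Section~\ref{carleson_capacity}: for $\alpha\le 1$, a function vanishing on a boundary set of positive $\alpha$-capacity (or more classically, an outer function times a singular inner factor) can be non-cyclic despite having no zeros inside $\D$. I would take $f$ to be such a non-cyclic function with $Z(f)\cap\D=\emptyset$, and then approximate it by $f_n = f + \varepsilon_n$ with $\varepsilon_n\to 0$, or by dilations $f_r(z)=f(rz)$ with $r\to 1^-$; these perturbations push the boundary zeros off $\overline{\D}$ or regularize the singular behavior, rendering each $f_n$ zero-free on $\overline{\D}$ and therefore cyclic by Proposition~\ref{cyclic_alpha>1} when $\alpha>1$, and cyclic by the sufficiency in Remark~\ref{cyclicity} when $\alpha\le 1$.

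The hard part will be the counterexample, not the positive direction. The positive implication is a short three-line argument once the multiplier algebra is used correctly. The difficulty lies in choosing $f$ and the approximating sequence so that (i) each $f_n$ is genuinely cyclic, (ii) $f_n\to f$ in the $D_\alpha$ norm (which constrains how fast the boundary singularity can be smoothed), and (iii) the limit $f$ has $Z(f)\cap\D=\emptyset$ yet is non-cyclic. For $\alpha>1$ this seems delicate since Proposition~\ref{cyclic_alpha>1} says non-cyclicity forces a zero in $\overline{\D}$, so any non-cyclic limit must vanish somewhere on $\T$; I would need the approximants to converge in norm to a function with a boundary zero while themselves staying zero-free, and check that the norm convergence is compatible with the coefficient decay required by $D_\alpha$. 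Verifying the norm estimate $\|f_n - f\|_\alpha\to 0$ for the chosen family is where the real work lies.
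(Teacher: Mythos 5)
Your positive direction is correct and is essentially the paper's argument: both rest on the identity $pf-1=(f/f_{n})(pf_{n}-1)-(1-f/f_{n})$, the multiplier bound for the first term and hypothesis (b) for the second; you merely phrase it via the closed subspace $[f]_{\alpha}$ rather than via an explicit $\epsilon$-estimate. Your observation that the uniform constant $C$ in (a) is not strictly needed --- only that each $f/f_{n}$ is individually a multiplier --- is correct and is a mild sharpening of the hypothesis.

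The ``moreover'' clause is where your proposal has a genuine gap: you outline a strategy but carry out none of the three constructions, and you yourself defer the norm-convergence verification as ``where the real work lies.'' Worse, one of your two proposed perturbations fails in a key case: for $\alpha\le 0$ the natural non-cyclic limit with $Z(f)\cap\D=\emptyset$ is the singular inner function $g(z)=e^{-(1+z)/(1-z)}$, whose image is $\D\setminus\{0\}$, so $g+\varepsilon_{n}$ acquires zeros \emph{inside} $\D$ and is therefore not cyclic; the additive perturbation does not produce the required approximating sequence there. Your alternative (dilations, or equivalently the paper's device of moving the singularity of the exponent to $1+1/n$) does work for that case, and $f+1/n$ does work for outer functions nonnegative on $\T$, but for $0<\alpha\le 1$ you still must actually exhibit a non-cyclic outer function in $D_{\alpha}\cap\mathcal{C}(\overline{\D})$ vanishing on a Carleson set of positive $\alpha$-capacity --- the Cantor-set construction via Theorem \ref{positive_capacity} and \cite[Theorem 9.2.8]{EKMR14} that occupies most of the paper's proof and is absent from yours. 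Finally, for $\alpha>1$ the example is the easiest of the three ($f_{n}=1+1/n-z\to 1-z$, non-cyclic by Proposition \ref{cyclic_alpha>1}), not the most delicate as you suggest.
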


	\begin{proof}
		Let $\epsilon>0$. Since $f_{n}\in D_{\alpha}$ is cyclic for every $n\in \N$, there exists a sequence of polynomials $\{p_{m,n}\}_{m=0}^{\infty}$ and a constant $M(n)$ depending on $n$ such that $\|p_{m,n} f_{n} - 1\|_{\alpha} < \epsilon$ if $m>M(n)$.  
		We will see that we can choose $n_{0}\in \N$ such that $\|p_{m,n_{0}} f - 1\|_{\alpha} < \epsilon$ if $m>M(n_{0})$. We can easily decompose $p_{m,n_0}f-1$ into a difference yielding:
\[			\|p_{m,n_{0}} f - 1\|_{\alpha}
			 = \|f/f_{n_{0}} (p_{m,n_{0}} f_{n_{0}} -1) - (1- f/f_{n_{0}}) \|_{\alpha}. \]
The definition of the multiplier norm and the triangle inequality guarantee that this is bounded above by
\[ \|f/f_{n_{0}}\|_{M(D_{\alpha})} \|p_{m,n_{0}} f_{n_{0}} -1 \|_{\alpha} +  \| 1 - f/f_{n_{0}}\|_{\alpha}. \]
Our first assumption guarantees now that the first term is controlled by   \[ C \|p_{m,n_{0}} f_{n_{0}} -1 \|_{\alpha} < C\epsilon \] while the second hypothesis tells us that the second term is also arbitrarily small. This proves the main conclusion of the theorem.
	
	To prove the remaining part, we separate the cases $\alpha \leq 0$, $0<\alpha \leq 1$ and $\alpha>1$. 
	
	For $\alpha \leq 0$, consider the singular inner function 
	\begin{equation}
	    g(z)=e^{-\frac{1+z}{1-z}}.
	\end{equation}
	which verifies that $|g(z)|<1$ for all $z\in \D$. Then, $g\in H^{\infty}=M(D_{\alpha}) \subset D_{\alpha }$ for all $\alpha \leq 0$. Moreover, $g$ is not cyclic for $D_{\alpha}$ (see for example \cite{Ko81}). 
	
	We shall see that $g$ can be approximated in $D_{\alpha}$ by a sequence $\{g_{n}\}_{n\in\N}$ of cyclic functions. To prove this, consider for $n\in \N$  $$g_{n}(z)=e^{-\frac{1+z}{(1+1/n)-z}}$$ and observe that $g_{n}$ satisfies the following inequality
	\begin{equation}
	e^{-2n} \leq |g_{n}(z)| 
	< 1, \qquad z\in \D.
	\end{equation}
	Hence, $g_{n} \in H^{\infty} \subset D_{\alpha}$ and it is bounded below by a positive constant for each $n\in \N$ which implies, by Remark \ref{cyclicity}, that $g_{n}$ is cyclic for all $n\in \N$. 
	
	It remains to be seen that $\|g_{n}-g\|_{\alpha}\to 0$ as $n\to\infty$. Since $\|f\|_{\alpha} \leq \|f\|_{0}$ for all $f\in D_{\alpha}$ and for all $\alpha\leq 0$, it suffices to show the convergence in $H^2$. Now, notice first that since $g_{n}$ is holomorphic across the boundary $\T$, it has a radial limit $g_{n}^{*}(e^{i\theta})=g_{n}(e^{i\theta}) $ for all $\theta \in (-\pi,\pi]$. On the other hand, if $\theta\neq 0$, the limit function $g$ has radial limit $g^{*}(e^{i\theta}) = g(e^{i\theta}) = -i e^{ \cot(\theta/2)}$ and $g^{*}(1)=0$. Hence, we can decompose
	\[\|g_{n}-g\|_{0}^{2}
		= \frac{1}{2\pi}\int_{-\pi}^{\pi} |g^{*}_{n}(e^{i\theta}) - g^{*}(e^{i\theta})|^{2}\,d\theta = (I) + (II), \]
where $(I)$ is the integral of the same quantity over $\epsilon \leq |\theta|\leq \pi$ and $(II)$ that over the remaining cases $|\theta| < \epsilon$.

From the definition of $g_{n}$ and the fact that $|g(z)|\leq 1$ for all $z\in\D$, it is standard to see that \[|g^{*}_{n}(e^{i\theta}) - g^{*}(e^{i\theta})|^{2} \leq 4.\]
	For $\epsilon\leq |\theta|\leq \pi$, we can moreover see that 
	\begin{equation}\label{g_convergence}
	    |g^{*}_{n}(e^{i\theta}) - g^{*}(e^{i\theta})|^{2}
	    = e^{2\Re(A)} - 2 \Re(e^{A})+1
	\end{equation}
	where $$A= \frac{1}{n} \frac{1+e^{i\theta}}{((1+1/n)-z)(1-e^{i\theta})},$$ which converges to $0$ as $n$ tends to $\infty$. So, \eqref{g_convergence} converges to $0$ as $n\rightarrow \infty$. Then, by Lebesgue's Dominated Convergence Theorem, (I) tends to zero when $n\rightarrow \infty$. The integral (II) is less than $4\epsilon/\pi$. Since this holds for every $\epsilon>0$, we get that $\|g_{n}-g\|_{0}\to 0$ when $n\rightarrow \infty$.

	The case $0< \alpha \leq 1$ is a little more subtle. Let $E\subset \T$ be a set with Lebesgue measure zero and positive $\alpha$-capacity. This set can be constructed as follows: Fix $\alpha$ and take $\varepsilon=1-2^{\alpha/(\alpha-2)}$. Observe that then 
	\begin{equation}
	    \frac{\alpha}{2}= 1- \frac{ \log(2)}{\log(\frac{2}{1-\varepsilon})}.
	\end{equation}
	
	Consider the Cantor set $E$ obtained by removing in each step an open middle-interval of length $\varepsilon$ times the length of each interval on the level $k-1$. It is a standard computation to see that $E$ has Lebesgue measure zero and Hausdorff dimension $\log(2)/\log(2/1-\varepsilon)$. 

	Therefore,  (since $\alpha> 1 - \log(2)/\log(2/1-\varepsilon)$) $c_{\alpha}(E)>0$ by Theorem \ref{positive_capacity}.  Moreover, we claim that $E$ is a Carleson set and this is easily seen using the condition \eqref{carleson_set}.
	
	Now, there exists an outer non-cyclic multiplier $f\in D\cap \mathcal{C}(\overline{\D})$ (and consequently $f\in D_{\alpha}\cap \mathcal{C}(\overline{\D})$ for $0<\alpha<1$) such that its zero set coincides with $E$ (see \cite[Theorem 9.2.8]{EKMR14}). In addition, the restriction of $f$ to $\T$ defines a \emph{non-negative} real function. 
	
	The idea is to define a sequence of cyclic functions that converges to $f$. This is done by adding the constant $1/n$ for $n\in \N$ to $f$, obtaining a sequence  $\{f_{n} = f+1/n\}_{n=1}^{\infty} \subset D_{\alpha}$. Since all of the functions $f_n$ are outer and their restrictions to $\T$ are bounded below by $1/n$ for the corresponding $n\in\N$, they are all cyclic in $D_\alpha$.

	Finally, for $\alpha>1$ and $n\in \N$, define $f_{n}(z)= 1 + \frac{1}{n}-z$. Then $\{f_{n}\}_{n=1}^{\infty}$ is a sequence of cyclic functions for $D_{\alpha}$ (by Proposition \ref{cyclic_alpha>1}) that converges to $f(z)=1-z$. However, from the same proposition it is also clear that $f$ is not cyclic for $D_{\alpha}$ for $\alpha >1$.
	\end{proof}

The previous result has the following consequence:

\begin{corollary}
The set of cyclic functions in $D_\alpha$ is not closed in the topology of $D_\alpha$. In particular, the set of outer functions is not closed in the topology of $H^2$.
\end{corollary}	

For the preservation of non-cyclicity we can also construct counterexamples but only when $\alpha \leq 1$:

	\begin{prop}\label{thm32} Let $\alpha \in \R$. Then:
	\begin{enumerate}
		\item If $\alpha>1$, the limit of a convergent sequence of non-cyclic functions for $D_{\alpha}$ is not cyclic.
		
		\item If $\alpha \leq 1$, there exists a sequence of non-cyclic functions for $D_{\alpha}$ that converges to a cyclic one.
	\end{enumerate}

	\end{prop}

	\begin{proof}
		(a) Let $\alpha>1$ and let $\{f_{n}\}_{n=1}^{\infty}$ be a sequence of non-cyclic functions for $D_{\alpha}$ converging to $f\in D_{\alpha}$. Since $f_{n}$ is not cyclic, Proposition \ref{cyclic_alpha>1} implies that there is $z_{n}\in\overline{\D}$ such that $f_{n}(z_{n})=0$. By compactness, there is a convergent subsequence $\{z_{n_k}\}_{k=1}^{\infty}$ of $\{z_{n}\}_{n=1}^{\infty}$ and an accumulation point of that subsequence $z_{\infty}\in \overline{\D}$. It must be the case that $f(z_\infty)=0$, and thus $f$ is not cyclic, by applying again Proposition \ref{cyclic_alpha>1}.

		(b) Consider $f_{n}(z)= 1-\frac{1}{n}-z$ and $f(z)=1-z$. Then, $f_{n}, f\in D_{\alpha}$ (since they are polynomials), $\{f_{n}\}_{n=1}^{\infty}$ converges to $f$, $f_{n}$ is not cyclic for $D_{\alpha}$ for $n\in \N$ since its only zero is inside $\D$, but $f$ is cyclic for $D_{\alpha}$. The latter follows from the fact that $1-z$ is cyclic in $D_1$  \cite[Lemma 8]{BS84} and the fact that if a function $g$ is cyclic for $D_{1}$ then it is cyclic for $D_{\alpha}$ for $\alpha \leq 1$, which is clear from the equivalent form of cyclicity in \eqref{eqn2001}. 
		\end{proof}

\section{Quantitative results}\label{sect4}

Denote by $\mathcal{P}_{d}$ the space of all polynomials of degree at most $d$ equipped with the norm $\|\cdot\|_{\alpha}$. Then, for $f \in D_\alpha\backslash \{0\}$ the orthogonal projection of $1$ onto the finite-dimensional Hilbert space $f\mathcal{P}_{d} = \{pf : p \in \mathcal{P}_{d} \}$ always exists and is uniquely determined, yielding an element $p_{d}^{*}f$. The element $p_{d}^{*} \in \mathcal{P}_{d}$ minimizes the norm $\|pf-1\|_{\alpha}$ and it is called the {\it $d$-th optimal polynomial approximant (o.p.a.)} to $1/f$. From \eqref{eqn2001} it is clear now, that a function $f$ is cyclic if and only if \[\|p_d^* f-1\|_\alpha \rightarrow 0 \text{ as  } d\rightarrow \infty.\]
	    
In \cite[Theorem 2.1]{FMS14}, the authors proved that the coefficients of the $d$-th o.p.a. $p_{d}^{*}$ to $1/f$ are the entries of the only solution $c\in \C^{d+1}$ to the linear system
	    \begin{equation}\label{linear_system_opa}
	          Mc=b
	    \end{equation}
	    where $M\in \C^{(d+1)\times(d+1)}$ is the matrix with entries
	    $$m_{j,k} = \langle S^{k}f,S^{j}f \rangle_{\alpha} \qquad \text{ for } \qquad  j,k=0,...,d,$$
	    and $b \in \C^{(d+1)}$ is the vector with entries \[b_{k}=\langle 1, S^{k}f \rangle_{\alpha}  \qquad \text{ for } \qquad  k=0,...,d .\]
	    
	    Given $f\in D_{\alpha}\setminus\{0\}$ and $\{f_{n}\}_{n=1}^{\infty}\subset D_{\alpha}$ which converges to $f$, then the $d$-th o.p.a. to $1/f$ can be approximated by the o.p.a. to $1/f_{n}$. We are ready to see a quantitative estimate of the distance between the $d$-th o.p.a. corresponding to $f$ and $f_n$. Denote by $\|\cdot\|_F$ the Frobenius norm for matrices.
 		
 		\begin{thm}\label{thm1a}
 			Let $\alpha\in \R$, $d\in\N$ and $\{f_{n}\}_{n=1}^{\infty}$ be a sequence in $D_{\alpha}$ and $f\in  D_{\alpha}$. Consider the $d$-th o.p.a. $p_{d}$ to $1/f$ and the $d$-th o.p.a. $p_{d,n}$ to $1/f_{n}$, and their corresponding linear systems of the form \eqref{linear_system_opa}, denoted by $Mc=b$ and $M_{n}c_{n}=b_{n}$, $n\in \N$, respectively. Then, there exist constants $\varphi(d,\alpha)$ independent of $n$ such that for all $n\in\N$	\begin{enumerate}\label{convergence_oap}
 			    \item[(a)] $\|b_{n}-b\|_{0} \leq \|f_{n} - f\|_{\alpha}$,
 			    \item[(b)] $\|M_{n}-M\|_{F}
 			    \leq \varphi(d,\alpha) \,(\|f_{n}\|_{\alpha} + \|f\|_{\alpha}) \, \|f_{n}-f\|_{\alpha}$.
 			    
 			\end{enumerate}
 			
 			Moreover, there exists $\psi(d,\alpha)$ such that
 		\[\|p_{d,n} - p_{d}\|_{\alpha} \leq \psi(d,\alpha) \|M_{n}^{-1}\|_{F} \left( \|b_{n}-b\|_{0} + \|M^{-1}\|_{F} \|M-M_{n}\|_{F}  \|b\|_{0}\right).\]
 		\end{thm}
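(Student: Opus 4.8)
The plan is to prove the three estimates by unwinding the definitions of $b$ and $M$ in terms of the inner product $\langle\cdot,\cdot\rangle_{\alpha}$, controlling everything through Cauchy--Schwarz together with the operator norm of $S$ recorded in Remark \ref{rem001}, and then handling the o.p.a.\ difference with a standard resolvent-type identity relating the inverses of the two Gram matrices.

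For part (a), I would first compute $b$ explicitly. Since $1$ has only a constant term, $b_{k} = \langle 1, S^{k}f\rangle_{\alpha} = \overline{(S^{k}f)(0)}$, and $(S^{k}f)(0)=0$ for every $k\geq 1$ while $(S^{0}f)(0)=f(0)$. Thus $b=(\overline{f(0)},0,\dots,0)^{\top}$, and the same holds for $b_{n}$ with $f_{n}$ in place of $f$. Consequently $\|b_{n}-b\|_{0} = |f_{n}(0)-f(0)| = |(f_{n}-f)(0)|$, and since the weight attached to the zeroth coefficient in $\|\cdot\|_{\alpha}$ equals $1^{\alpha}=1$, one has $|(f_{n}-f)(0)|\leq \|f_{n}-f\|_{\alpha}$, which is the claim. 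This computation also shows $\|b\|_{0}=|f(0)|$, a fact I would retain for later use.

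For part (b), I would write each entry difference as a telescoping sum: adding and subtracting $\langle S^{k}f_{n}, S^{j}f\rangle_{\alpha}$ gives
\[(M-M_{n})_{j,k} = \langle S^{k}(f-f_{n}), S^{j}f\rangle_{\alpha} + \langle S^{k}f_{n}, S^{j}(f-f_{n})\rangle_{\alpha}.\]
Applying Cauchy--Schwarz to each term and then $\|S^{m}g\|_{\alpha}\leq \|S\|^{m}\|g\|_{\alpha}$, every entry is bounded by $\|S\|^{j+k}(\|f\|_{\alpha}+\|f_{n}\|_{\alpha})\|f_{n}-f\|_{\alpha}$. Summing the squares over $0\leq j,k\leq d$ and taking the square root yields the Frobenius bound with $\varphi(d,\alpha)=\bigl(\sum_{j,k=0}^{d}\|S\|^{2(j+k)}\bigr)^{1/2}=\sum_{j=0}^{d}\|S\|^{2j}$, a finite constant depending only on $d$ and, through Remark \ref{rem001}, on $\alpha$.

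For the final estimate, I would first pass from polynomials to coefficient vectors: the difference $p_{d,n}-p_{d}$ has coefficient vector $c_{n}-c$, so $\|p_{d,n}-p_{d}\|_{\alpha}^{2}=\sum_{k=0}^{d}|(c_{n}-c)_{k}|^{2}(k+1)^{\alpha}\leq \psi(d,\alpha)^{2}\|c_{n}-c\|_{0}^{2}$ with $\psi(d,\alpha)=\max\{1,(d+1)^{\alpha/2}\}$. It then remains to bound $\|c_{n}-c\|_{0}$. Writing $c=M^{-1}b$ and $c_{n}=M_{n}^{-1}b_{n}$ from \eqref{linear_system_opa} (both matrices being invertible Gram matrices of the linearly independent families $\{S^{k}f\}$ and $\{S^{k}f_{n}\}$, which is where $f,f_{n}\neq 0$ is used), I would split
\[c_{n}-c = M_{n}^{-1}(b_{n}-b) + (M_{n}^{-1}-M^{-1})b\]
and substitute the identity $M_{n}^{-1}-M^{-1}=M_{n}^{-1}(M-M_{n})M^{-1}$. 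Taking Euclidean norms and using that $\|\cdot\|_{F}$ is submultiplicative and dominates the operator norm gives
\[\|c_{n}-c\|_{0}\leq \|M_{n}^{-1}\|_{F}\bigl(\|b_{n}-b\|_{0}+\|M^{-1}\|_{F}\|M-M_{n}\|_{F}\|b\|_{0}\bigr),\]
and combining with the coefficient-to-polynomial passage produces the stated inequality. The computations are elementary throughout; the only conceptual point, rather than a genuine obstacle, is the resolvent-type identity for $M_{n}^{-1}-M^{-1}$, which is what converts a difference of inverses into a product controlled by the quantity $\|M-M_{n}\|_{F}$ already estimated in part (b).
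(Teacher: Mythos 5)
Your proof is correct and follows essentially the same route as the paper: the explicit computation of $b$ (only the zeroth entry survives), the telescoping decomposition of the Gram-matrix entries controlled by Cauchy--Schwarz and the operator norms $\|S^{j}\|$, and the resolvent identity $M_{n}^{-1}-M^{-1}=M_{n}^{-1}(M-M_{n})M^{-1}$. The only divergence is bookkeeping: you use the plain Frobenius norm and pass from coefficient vectors to $\|\cdot\|_{\alpha}$ directly via $\psi(d,\alpha)=\max\{1,(d+1)^{\alpha/2}\}$, whereas the paper routes the same estimates through the change-of-basis matrix $D$ of Remark \ref{remark_linear_trans}, yielding different but equally admissible constants $\varphi(d,\alpha)$ and $\psi(d,\alpha)$.
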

 		
Naturally, we are more interested in the case in which the sequence $f_n$ is converging in $D_\alpha$ to $f$. In that case, we have the following straightforward implication.
 		
 	    \begin{corollary}
 	Under the same hypotesis as in Theorem \ref{convergence_oap}, if we assume that $\{f_{n}\}_{n=1}^{\infty}$ converges to $f$ in the norm of $D_{\alpha}$, then
 	\begin{equation}
 	    \lim_{n\rightarrow \infty} \|b_{n} - b\|_{0}= \lim_{n\rightarrow \infty} \|M_{n} - M\|_{F}= \lim_{n\rightarrow \infty}\|p_{d,n} - p_{d}\|_{\alpha}=0
 	\end{equation}
with a rate of decay in all cases of the form  $C(d,\alpha,f)\|f_{n} - f\|_{\alpha}$.
 		\end{corollary}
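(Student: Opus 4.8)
The plan is to read all three limits directly off Theorem~\ref{thm1a}; the only work beyond substitution is to replace the $n$-dependent prefactors appearing there by a single constant depending on $f$ alone. The one elementary fact I use repeatedly is that a norm-convergent sequence is bounded, so that in particular $\|f_n\|_\alpha \le \|f\|_\alpha + 1$ once $\|f_n - f\|_\alpha \le 1$.

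The first two limits are immediate. For $b_n$, part~(a) already gives $\|b_n - b\|_0 \le \|f_n - f\|_\alpha$, which has the advertised form with constant $1$ and tends to $0$ by hypothesis. For $M_n$, part~(b) gives $\|M_n - M\|_F \le \varphi(d,\alpha)(\|f_n\|_\alpha + \|f\|_\alpha)\|f_n - f\|_\alpha$; bounding $\|f_n\|_\alpha + \|f\|_\alpha \le 2\|f\|_\alpha + 1$ for large $n$ turns this into $\varphi(d,\alpha)(2\|f\|_\alpha + 1)\|f_n - f\|_\alpha$, an $n$-independent multiple of $\|f_n - f\|_\alpha$, which again tends to $0$.

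For the polynomials I would invoke the ``moreover'' estimate
\[
\|p_{d,n} - p_d\|_\alpha \le \psi(d,\alpha)\,\|M_n^{-1}\|_F\bigl(\|b_n - b\|_0 + \|M^{-1}\|_F\,\|M - M_n\|_F\,\|b\|_0\bigr).
\]
By the first two limits the parenthesized factor is at most $C'(d,\alpha,f)\|f_n - f\|_\alpha$, since $\|b\|_0$ is a fixed number determined by $f$. The only term not yet controlled is the prefactor $\|M_n^{-1}\|_F$, and bounding it uniformly in $n$ is the crux of the argument. Here I would use that $M$ is the Gram matrix of $f, Sf, \dots, S^d f$; because $f \ne 0$ these functions are linearly independent, so $M$ is positive definite and in particular invertible. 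Since $M_n \to M$ in Frobenius norm (the second limit) and matrix inversion is continuous on the open set of invertible matrices, there is an $N$ with $M_n$ invertible and $\|M_n^{-1}\|_F \le 2\|M^{-1}\|_F$ for $n \ge N$. Absorbing this bound together with $\psi(d,\alpha)$, $\|M^{-1}\|_F$ and $\|b\|_0$ into one constant $C(d,\alpha,f)$ yields $\|p_{d,n} - p_d\|_\alpha \le C(d,\alpha,f)\|f_n - f\|_\alpha$ for $n \ge N$, giving the third limit.

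The hard part is therefore solely the uniform control of $\|M_n^{-1}\|_F$: every other step is a mechanical substitution of parts~(a) and~(b). The argument hinges on $f \ne 0$, which makes $M$ invertible and also forces $f_n \ne 0$ for large $n$ so that the $p_{d,n}$ are defined, together with the elementary continuity of inversion.
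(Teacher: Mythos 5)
Your proof is correct and follows the route the paper intends: the corollary is stated as a direct consequence of Theorem~\ref{thm1a}, and the paper offers no separate proof beyond calling it straightforward. The one genuinely non-mechanical step, which you rightly identify and supply, is the eventual uniform bound on $\|M_n^{-1}\|_F$: since $f\neq 0$, the Gram matrix $M$ of $f,Sf,\dots,S^df$ is positive definite, and continuity of inversion together with $M_n\to M$ gives $\|M_n^{-1}\|_F\le 2\|M^{-1}\|_F$ for large $n$ (the finitely many remaining indices being absorbed into the constant), which is exactly what is needed to convert the theorem's $n$-dependent prefactor into a constant $C(d,\alpha,f)$.
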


    	\begin{rem}\label{remark_linear_trans}
 		Before we prove Theorem \ref{convergence_oap} we want to discuss some aspects about linear transformations $T:(\mathcal{P}_{d},\|\cdot\|_{0}) \to (\mathcal{P}_{d},\|\cdot\|_{0})$. Let $\cB= \{z^{k}\}_{k=0}^{d}$ be the standard orthonormal basis of $(\mathcal{P}_{d},\|\cdot\|_{0})$.
 		Then, if $[T]_{\cB}= \left(t_{j,k}\right)_{j,k=0}^d$ is the corresponding matrix and $[p]_{\cB}$ is the coordinate vector of $p\in \mathcal{P}_{d}$ in the same basis, we have that
 		$[Tp]_{\cB} = [T]_{\cB}[p]_{\cB}$. Moreover, by Cauchy-Schwarz inequality 
 		\begin{align*}
 		    \|T\|^{2} := \sup_{\|p\|_{0}=1} \|[Tp]_{\cB}\|^{2}_{0} 
 		    = \sup_{\|p\|_{0}=1} \sum_{j=0}^{d} \left| \sum_{k=0}^{d} t_{j,k} a_{k} \right|^{2}
 		    & \leq  \sum_{j=0}^{d} \left( \sum_{k=0}^{d} |t_{j,k}|^{2} \right).
 		\end{align*}
 		The right-hand side above is equal to $\|[Tp]_{\cB}\|^{2}_{F}$.

 		Our goal is to determine the matrix representation of $T$ being defined in the space $\mathcal{P}_{d}$ endowed with the norm $\|\cdot\|_{\alpha}$. Let $\cB^{'}= \{z^{k}/(k+1)^{\alpha/2}\}_{k=0}^{d}$ be the orthonormal basis of $(\mathcal{P}_{d},\|\cdot\|_{\alpha})$.
 		The change of basis matrix from $\cB^{'}$ to $\cB$ is 
 		$$D = \begin{pmatrix}
 		1 & 0 & \dots & 0 \\
 		0 & 1/2^{\alpha/2} & \dots & 0 \\
 		\vdots & \vdots & \ddots & \vdots \\
 		0 & 0 & \dots & 1/(d+1)^{\alpha/2}
 		\end{pmatrix},$$
 		and then, $D [p]_{\cB^{'}} = [p]_{\cB}$. It follows that
 		$$ [T]_{\cB^{'}} [p]_{\cB^{'}} =  [Tp]_{\cB^{'}} = D^{-1} [Tp]_{\cB} = D^{-1} [T]_{\cB} [p]_{\cB} = D^{-1} [T]_{\cB} D [p]_{\cB^{'}},$$
 		that is,
 		$$[T]_{\cB^{'}} = D^{-1} [T]_{\cB} D.$$
 		As a consequence of the above, observe that
 \begin{equation}\label{eqn2003}
\|[T]_{\cB^{'}}\|_{F}^{2} = {\rm trace}([T]_{\cB^{'}}^{*} [T]_{\cB^{'}}) = \sum_{j,k=0}^{m} |t_{j,k}|^{2} \frac{(j+1)^{\alpha}}{(k+1)^{\alpha}}.\end{equation}
 		
 		\end{rem}
 		
 		\begin{proof}[Proof of Theorem \ref{convergence_oap}]
 		
        Property (a) is immediate once we notice that $|b_{n,0} - b_{0}|=|f_{n}(0) - f(0)|$ and $|b_{n,k} - b_{k}|=0$ for all $k\geq 1$. Hence,
 	\[\|b_{n}-b\|_{0}^{2} = \sum_{k=0}^{d} |b_{n,k}- b_{k}|^{2} = |f_{n}(0) - f(0)|^{2} \leq \|f_{n}-f\|^{2}_{\alpha}.\]

 		Now, we turn to show the validity of (b). We write $m_{j,k}^{n}$ and $m_{j,k}$ to denote the entries of $M_{n}$ and $M$ respectively. Indeed, from their definition, we see that
 \[   |m_{j,k}^{n}-m_{j,k}|
 			    = | \langle S^{k}f_{n},S^{j}f_{n} \rangle_{\alpha} - \langle S^{k}f,S^{j}f \rangle_{\alpha} |. \]
The triangle inequality yields that
 \[   |m_{j,k}^{n}-m_{j,k}|
\leq |\langle S^{k} f_{n} , S^{j}(f_{n}-f) \rangle_{\alpha}| + |\langle S^{k} (f_{n}-f) ,  S^{j} f \rangle_{\alpha}|. \]
By the definition of the operator norm and the Cauchy-Schwarz inequality, one can isolate the dependence on $S^j$ and get:
 \[   |m_{j,k}^{n}-m_{j,k}| \leq \|S^{j}\| \|S^{k}\| \|f_{n}-f\|_{\alpha} \left( \|f_{n}\|_{\alpha} + \|f\|_{\alpha} \right).\]

		Note that $\|S^{t}\|$ is comparable to $(t+1)^{\alpha/2}$ when $\alpha \geq 0$ and to $1$ otherwise. Then,
\[\|M_{n}-M\|_{F}^{2} = \sum_{j,k=0}^{d} |m_{j,k}^{n} - m_{j,k}|^{2} \frac{(j+1)^{\alpha}}{(k+1)^{\alpha}}\] may be bounded from above by
\[ \varphi(d,\alpha)^{2} (\|f_{n}\|_{\alpha} + \|f\|_{\alpha})^{2}\|f_{n}-f\|_{\alpha}^{2},\] where, using the notation in Remark \ref{remark_linear_trans},  \begin{equation}\label{expression_phi}
 		        \varphi(d,\alpha) =  \begin{cases} 
 			    \|D\|_{F} \|D^{-1}\|_{F}& \text{ if } \alpha < 0, \\
 			    d+1 & \text{ if } \alpha=0, \\
 			    (d+1) \|D^{-2}\|_{F}& \text{ if } \alpha>0.
 			    \end{cases}
 		    \end{equation}

 		 This proves (b).
 		 
 		Finally, by using (a) and (b) we are going to prove that $\|p_{d,n}-p_{d}\|_{\alpha}$ is bounded and describe its dependence on $\|f_{n}-f\|_{\alpha}$.
 		We express the information we have in the notation of Remark \ref{remark_linear_trans}. If we identify $b_n$ and $b$ with the constants $b_n(0)$ and $b(0)$ as elements of $\mathcal{P}_d$, we can write the equations in \eqref{linear_system_opa} as
 		\begin{equation}\label{n_linear_systems}
 		    M_{n}[p_{n,d}]_{\cB} = [b_{n}]_{\cB} \quad \text{ and } \quad  M[p_{d}]_{\cB} = [b]_{\cB}.  
 		\end{equation}
 		
 		Observe that \eqref{n_linear_systems} holds with respect to any basis, in particular, in the basis $\cB^{'}$ that we define in Remark \ref{remark_linear_trans}.
 		
 		Then we get that
 \[   \|p_{d,n} - p_{d}\|_{\alpha}^{2} = \sum_{k=0}^{d} |c_{k}^{n}(k+1)^{\alpha/2} - c_{k}(k+1)^{\alpha/2}|^{2}\]	which is equal to \[\left\|[p_{d,n}]_{\cB^{'}}-[p_{d}]_{\cB^{'}} \right\|_{0}^{2}
 		   = \left\|[M_{n}]_{\cB^{'}}^{-1} [b_{n}]_{\cB^{'}} - [M]_{\cB^{'}}^{-1} [b]_{\cB^{'}} \right\|_{0}^{2}. \]
 		Applying the triangle inequality, we have that $ \|p_{d,n} - p_{d}\|_{\alpha}$ is bounded above by \[\left\|[M_{n}]_{\cB^{'}}^{-1} [b_n]_{\cB^{'}} - [M_{n}]_{\cB^{'}}^{-1} [b]_{\cB^{'}} \right\|_{0} +  \left\| [M_{n}]_{\cB^{'}}^{-1} [b]_{\cB^{'}} - [M]_{\cB^{'}}^{-1} [b]_{\cB^{'}}  \right\|_{0}. \]
  Recalling that $[M_{n}]_{\cB^{'}}^{-1} = D^{-1}M_{n}^{-1}D$, $[M]_{\cB^{'}}^{-1} = D^{-1}M^{-1}D$ and $[b]_{\cB^{'}}=D^{-1}[b]_{\cB}$ we have the following estimate for the first term on the right-hand side:
 \[ \left\|[M_{n}]_{\cB^{'}}^{-1} [b_n]_{\cB^{'}} - [M_{n}]_{\cB^{'}}^{-1} [b]_{\cB^{'}} \right\|_{0} \leq \|D^{-1}\|_{F}\|M_{n}^{-1}\|_{F} \|b_{n}-b\|_{0}.\]
  In order to control the second term as well, we use the following factorization: if $A,B$ are two invertible matrices, then \[A^{-1}-B^{-1} = B^{-1}(B-A) A^{-1},\]
 		 and therefore \[\|A^{-1}-B^{-1}\|_{F} \leq \|B^{-1}\|_{F} \|B-A\|_{F} \|A^{-1}\|_{F}.\] This last inequality implies that \[  \left\| [M_{n}]_{\cB^{'}}^{-1} [b]_{\cB^{'}} - [M]_{\cB^{'}}^{-1} [b]_{\cB^{'}}  \right\|_{0}  \leq \|D^{-1}\|_{F}
 		 \left\| M_{n}^{-1}  - M^{-1} \right\|_{F} \left\| b \right\|_{0}, \]
 which is bounded above by \[\|D^{-1}\|_{F} \|M^{-1}\|_{F} \left\|M - M_{n} \right\|_{F} \|M_{n}^{-1}\|_{F}  \left\| b \right\|_{0}.\]
 Summarizing we obtained that $\|p_{d,n} - p_{d}\|_{\alpha}$ is bounded by \[\|D^{-1}\|_{F} \|M_{n}^{-1}\|_{F} \left( \|b_{n}-b\|_{0} + \|M^{-1}\|_{F} \|M-M_{n}\|_{F}  \|b\|_{0}\right). \]
Noticing that $\|D^{-1}\|_{F}$ only depends on $d$ and $\alpha$, the result follows by taking $\psi(d,\alpha) = \|D^{-1}\|_{F}$.
 		\end{proof}

One of our main interests has been to find explicit estimates, which are constructed in Theorem \ref{thm1a}. The following corollary of the proof of the theorem specifies the nature of the constants.
 	\begin{corollary}
 	The constants $\varphi(d,\alpha)$ and $\psi(d,\alpha)$ in Theorem \ref{convergence_oap} can be taken to be $\varphi(d,\alpha) = C(\alpha) \varphi'(d,\alpha)$ where
 	\begin{equation}
 	    \varphi'(d,\alpha) \leq   \begin{cases}
 	    (d+1)^{(2-\alpha)/2} & \text{ if } \alpha<-1, \\
 	    (d+1)^{(1-\alpha)/2} \sqrt{\log(d+2)} & \text{ if } \alpha = -1, \\
 		d+1 & \text{ if } -1<\alpha<0, \\
 		(d+1)^{\alpha+1} & \text{ if } \alpha \geq 0,
 		\end{cases}
 	\end{equation}
 	and 
 	\begin{equation}
 	    \psi(d,\alpha) \leq \begin{cases}
 			    d+1 & \text{ if } \alpha< -1, \\ \log(d+2)  & \text{ if } \alpha=-1, \\
 			    (\alpha+1)^{-1}(d+1)^{\alpha+1} & \text{ if }  \alpha > -1.
 			    \end{cases} 
 	\end{equation}
 	
 	\begin{proof}
 	
 	Recall that both constants depend only on the Frobenius norms of $D$, $D^{-1}$ and $D^{-2}$. So, estimating these norms for the different values of $\alpha$ we obtain 
 	\begin{equation}
 	\|D\|_{F}^{2} =
 	\sum_{k=0}^{d} \frac{1}{(k+1)^{\alpha}}
 	\leq (d+1)^{1-\alpha}, \quad \alpha \leq 0,
 	\end{equation}
 	and
 	\begin{equation}
 	\|D^{-1}\|_{F}^{2} =
 	\sum_{k=0}^{d} (k+1)^{\alpha}
 	\leq \begin{cases}
 			    d+1 & \text{ if } \alpha < -1, \\ \log(d+2)  & \text{ if } \alpha =-1, \\
 			    (\alpha+1)^{-1}(d+1)^{\alpha+1} & \text{ if }  \alpha > -1.
 			\end{cases}
 	\end{equation}
 	
 	Moreover, for $\alpha>0$ we have
 	\begin{equation}
 	    \|D^{-2}\|_{F}^{2} = \sum_{k=0}^{d} (k+1)^{2\alpha} \leq (d+1)^{2\alpha+1}.
 	\end{equation}
 	
 	Hence, using \eqref{expression_phi} we get the expected bounds.
 	    \end{proof}
 			
 		\end{corollary}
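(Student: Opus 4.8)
The plan is to reduce the entire statement to elementary estimates on power sums $\sum_{k=0}^d (k+1)^\beta$. Recall from the proof of Theorem~\ref{thm1a} that both constants are completely determined by the diagonal change-of-basis matrix $D$ of Remark~\ref{remark_linear_trans}: we have $\psi(d,\alpha)=\|D^{-1}\|_F$, while $\varphi(d,\alpha)$ is assembled in \eqref{expression_phi} from $\|D\|_F$, $\|D^{-1}\|_F$ and $\|D^{-2}\|_F$. Since $D$ is diagonal with entries $(k+1)^{-\alpha/2}$, $k=0,\dots,d$, these Frobenius norms are nothing but square roots of power sums,
\[
\|D\|_F^2=\sum_{k=0}^d (k+1)^{-\alpha},\qquad \|D^{-1}\|_F^2=\sum_{k=0}^d (k+1)^{\alpha},\qquad \|D^{-2}\|_F^2=\sum_{k=0}^d (k+1)^{2\alpha},
\]
so the whole corollary follows once $S_\beta(d):=\sum_{k=0}^d (k+1)^\beta$ is controlled.

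The single estimate I would isolate is a trichotomy for $S_\beta(d)$, obtained by comparison with $\int_1^{d+2} t^\beta\,dt$ (or, when the series converges, by bounding each summand by $1$): up to a constant depending only on $\beta$, one has $S_\beta(d)\le (\beta+1)^{-1}(d+1)^{\beta+1}$ when $\beta>-1$, $S_\beta(d)\le \log(d+2)$ when $\beta=-1$, and the trivial $S_\beta(d)\le d+1$ when $\beta<-1$. The three regimes of $\beta$ are precisely what generate the three-line case split in the statement, and the critical exponent $\beta=-1$ of $\|D^{-1}\|_F^2$ corresponds to $\alpha=-1$, which is where the logarithm surfaces.

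The assembly is then purely arithmetic. For $\psi$ I would first observe $\|D^{-1}\|_F\ge 1$, since the $k=0$ term already contributes $1$ to $S_\alpha(d)$; hence $\psi=\|D^{-1}\|_F\le \|D^{-1}\|_F^2=S_\alpha(d)$, and the trichotomy with $\beta=\alpha$ gives the three cases for $\psi$ verbatim. For $\varphi$ with $\alpha<0$ I would multiply $\|D\|_F\le (d+1)^{(1-\alpha)/2}$ (from $S_{-\alpha}(d)\le (d+1)^{1-\alpha}$, each term being at most $(d+1)^{-\alpha}$) by the bound on $\|D^{-1}\|_F$ coming from $S_\alpha(d)$; the exponents add to $(d+1)^{(2-\alpha)/2}$ for $\alpha<-1$, pick up the $\sqrt{\log(d+2)}$ correction at $\alpha=-1$, and collapse to $(d+1)$ for $-1<\alpha<0$, with the factor $(\alpha+1)^{-1/2}$ swept into $C(\alpha)$. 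For $\alpha\ge 0$ the computation inside the proof of Theorem~\ref{thm1a} shows $\varphi$ is a constant multiple of $\sqrt{d+1}\,\|D^{-2}\|_F$; combining this with $\|D^{-2}\|_F\le (d+1)^{\alpha+1/2}$ (from $S_{2\alpha}(d)\le (d+1)^{2\alpha+1}$) yields $(d+1)^{\alpha+1}$, which reduces continuously to the value $d+1$ at $\alpha=0$.

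The work here is routine, and the only point deserving real care is the bookkeeping of exponents across the four regimes together with the deliberate choice to collect every $\alpha$-dependent multiplicative factor---the $(\alpha+1)^{-1/2}$, the implied constant in the harmonic-sum estimate, and the constants comparing $\|S^t\|$ with $(t+1)^{\alpha/2}$---into a single $C(\alpha)$, so that $\varphi'$ is left as a clean power of $(d+1)$, at worst times $\sqrt{\log}$. The borderline $\alpha=-1$ is the only genuinely non-polynomial case and I would verify it on its own rather than trying to subsume it under a neighbouring regime.
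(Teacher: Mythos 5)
Your proposal is correct and follows essentially the same route as the paper: both reduce everything to the Frobenius norms of the diagonal matrix $D$, i.e.\ to the power sums $\sum_{k=0}^d (k+1)^\beta$, and then run the standard trichotomy $\beta<-1$, $\beta=-1$, $\beta>-1$. Two of your refinements are worth keeping, because the paper's own write-up is slightly loose at exactly those points. First, for $\psi$ the paper bounds $\|D^{-1}\|_F^2$ and then silently quotes that bound for $\psi=\|D^{-1}\|_F$; your observation that $\|D^{-1}\|_F\ge 1$ (the $k=0$ term) so that $\psi\le\psi^2$ is the missing one-line justification. Second, and more substantively, for $\alpha>0$ a literal application of \eqref{expression_phi}, which records $\varphi=(d+1)\|D^{-2}\|_F$, combined with $\|D^{-2}\|_F\le (d+1)^{\alpha+1/2}$ would only give $(d+1)^{\alpha+3/2}$, not the stated $(d+1)^{\alpha+1}$; you correctly go back to the computation in the proof of Theorem \ref{thm1a}, where the double sum $\sum_{j,k}(j+1)^{2\alpha}$ factors as $(d+1)\|D^{-2}\|_F^2$ and hence yields $\varphi\le C(\alpha)\sqrt{d+1}\,\|D^{-2}\|_F$, which is what actually produces the exponent $\alpha+1$. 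The remaining bookkeeping (the exponent additions for $\alpha<0$, the $\sqrt{\log(d+2)}$ at $\alpha=-1$, and absorbing $(\alpha+1)^{-1/2}$ and the $\|S^t\|$ comparison constants into $C(\alpha)$) matches the paper.
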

 		
 		Since the bound that we arrived to in Theorem \ref{convergence_oap} for the norm $\|p_{d,n} - p_{d}\|_{\alpha}$  depends on $\|M^{-1}\|_{F}$, we examine the question of whether there are uniform bounds for the values that may appear for different functions $f$ in a space $D_\alpha$ (for a normalized $f$, since the entries of $M$ are 2-homogeneous on $f$). It turns out that one can't make any improvements following that direction and we show this point in the following proposition.
 		
 		\begin{prop}\label{propo45}
 		Let $\alpha \in\R$. There exists a sequence of normalized functions $\{g_{n}\}_{n=0}^{\infty} \subset D_{\alpha} \setminus \{0\}$ with an o.p.a. of degree 1 to $1/g_{n}$ given by a sequence of $2\times 2$ matrices $\{M_n\}_{n=0}^{\infty}$ such that \[\|M_{n}^{-1}\|_{F} \to \infty \qquad \text{ as } \qquad n \to \infty.\]
 		\end{prop}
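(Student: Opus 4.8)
The plan is to specialize to $d=1$ and reduce the whole statement to showing that a single scalar, $\det M_n$, tends to $0$. For $d=1$ the matrix $M_n$ is the $2\times 2$ Gram matrix of the pair $\{g_n,Sg_n\}$ in the $D_\alpha$ inner product, with entries $m_{0,0}^n=\|g_n\|_\alpha^2$, $m_{1,1}^n=\|Sg_n\|_\alpha^2$ and $m_{0,1}^n=\overline{m_{1,0}^n}=\langle Sg_n,g_n\rangle_\alpha$. Normalizing so that $\|g_n\|_\alpha=1$ forces $m_{0,0}^n=1$, and since $g_n\neq 0$ the functions $g_n$ and $Sg_n=zg_n$ are linearly independent, so $M_n$ is Hermitian positive definite and invertible. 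Inverting a $2\times 2$ matrix, its $(1,1)$ entry is $m_{0,0}^n/\det M_n=1/\det M_n$, and since the Frobenius norm dominates the modulus of any single entry,
\[\|M_n^{-1}\|_F\ \ge\ \frac{m_{0,0}^n}{\det M_n}\ =\ \frac{1}{\det M_n}.\]
Hence it suffices to construct normalized $g_n$ with $\det M_n=\|g_n\|_\alpha^2\|Sg_n\|_\alpha^2-|\langle Sg_n,g_n\rangle_\alpha|^2\to 0$; geometrically, $g_n$ and $Sg_n$ must become asymptotically parallel.

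The construction I would use is a normalized high-frequency block $g_n=c_n\sum_{k=m_n}^{m_n+N_n-1}z^k$, with $m_n,N_n\to\infty$ chosen so that $N_n/m_n\to 0$ (for instance $N_n=n$ and $m_n=n^2$). The reason for placing the block far out is that on it the weights $(k+1)^\alpha$ are all equal to $m_n^\alpha(1+o(1))$, so the block behaves as it would in $H^2$, where $S$ is an isometry possessing approximate eigenvectors at every unimodular point; such approximate eigenvectors are precisely the vectors for which the Gram determinant of $\{g_n,Sg_n\}$ degenerates. Concretely, $g_n$ and $Sg_n$ share all their coefficients except at the two endpoints of the block, and the determinant is controlled by those two mismatches against a total mass of order $N_n$.

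The computation itself is short. Writing $P=\sum_{l=m_n+1}^{m_n+N_n}l^\alpha$, $Q=\sum_{l=m_n+2}^{m_n+N_n+1}l^\alpha$ and $R=\sum_{l=m_n+2}^{m_n+N_n}l^\alpha$ for $\|g_n\|_\alpha^2/c_n^2$, $\|Sg_n\|_\alpha^2/c_n^2$ and $\langle Sg_n,g_n\rangle_\alpha/c_n^2$ respectively, one has $P=R+(m_n+1)^\alpha$ and $Q=R+(m_n+N_n+1)^\alpha$, so with the normalization $c_n^2=1/P$,
\[\det M_n=\frac{PQ-R^2}{P^2}=\frac{R\big[(m_n+1)^\alpha+(m_n+N_n+1)^\alpha\big]+(m_n+1)^\alpha(m_n+N_n+1)^\alpha}{P^2}.\]
Since every weight in the window is $m_n^\alpha(1+o(1))$ we get $P,R=N_nm_n^\alpha(1+o(1))$ and both endpoint weights are $m_n^\alpha(1+o(1))$; the powers $m_n^{2\alpha}$ cancel and $\det M_n\sim 2/N_n\to 0$, uniformly in $\alpha$. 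Combined with the displayed lower bound this yields $\|M_n^{-1}\|_F\ge 1/\det M_n\to\infty$.

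The main obstacle is uniformity in $\alpha$. The naive low-frequency block $g_n=c_n\sum_{k=0}^{n-1}z^k$ does the job whenever $\alpha\ge -1$, but it fails for $\alpha<-1$: there $\sum_k(k+1)^\alpha$ converges, the total mass stays bounded, and one checks that $\det M_n$ tends to a strictly positive limit rather than to $0$. Pushing the block out to frequencies $m_n\to\infty$ while keeping $N_n/m_n\to 0$ is exactly what flattens the weight and restores the Hardy-space degeneracy for every real $\alpha$; the only point requiring care is verifying that the $o(1)$ errors coming from the non-constancy of $(k+1)^\alpha$ across the window are genuinely negligible, which is where the hypothesis $N_n/m_n\to 0$ is used.
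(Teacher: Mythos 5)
Your argument is correct, and it reaches the conclusion by a genuinely different construction than the paper. You both exploit the same underlying mechanism: after normalizing $\|g_n\|_\alpha=1$, one entry of $M_n^{-1}$ equals $1/\det M_n$ (in the paper this appears as the observation that the numerator $\|Sg_n\|_\alpha^4+2|\langle Sg_n,g_n\rangle_\alpha|^2+1$ is at least $1$), so everything reduces to making $\det M_n=\|Sg_n\|_\alpha^2-|\langle Sg_n,g_n\rangle_\alpha|^2\to 0$, i.e.\ making $g_n$ and $Sg_n$ asymptotically parallel. Where you differ is in the choice of $g_n$: the paper splits into two cases, taking normalized reproducing kernels $k_{1/a_n}/\|k_{1/a_n}\|_\alpha$ with $a_n\to 1^+$ for $\alpha\le 0$ and normalized Cauchy kernels $(1-z/a_n)^{-1}$ for $\alpha>0$ (the latter requiring a polylogarithm computation), whereas you use a single high-frequency polynomial block $c_n\sum_{k=m_n}^{m_n+N_n-1}z^k$ with $N_n,m_n\to\infty$ and $N_n/m_n\to 0$, which handles every $\alpha\in\R$ uniformly. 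Your computation checks out: with $P=R+(m_n+1)^\alpha$ and $Q=R+(m_n+N_n+1)^\alpha$ one gets $PQ-R^2\sim 2N_nm_n^{2\alpha}$ against $P^2\sim N_n^2m_n^{2\alpha}$, so $\det M_n\sim 2/N_n$. What the paper's choice buys is a conceptually canonical family (kernels concentrating at a boundary point, which are approximate eigenvectors of $S^*$) and explicit closed-form inner products; what yours buys is the elimination of the case split and of the polylogarithm estimates, at the cost of the (correctly identified and correctly handled) need to push the block to high frequencies so that the weights $(k+1)^\alpha$ flatten out — your side remark that the low-frequency block genuinely fails for $\alpha<-1$ is also accurate.
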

 		
 		\begin{proof}
 		 For each $n\in\N$, we explicitly write the matrix $M_{n}$ associated to the o.p.a. to $1/g_{n}$ given in the linear system \eqref{linear_system_opa}, that is,
 	    $$M_{n}= \begin{pmatrix}
 	    1 & \langle g_{n},Sg_{n}\rangle_{\alpha}  \\
 	    \langle Sg_{n},g_{n}\rangle_{\alpha} & \|Sg_{n}\|_{\alpha}^{2}
 	    \end{pmatrix}.$$
 	    
 	    Notice that $M_{n}^{-1}$ is well-defined since Cauchy-Schwarz inequality says that the determinant of $M_{n}$ is stricly positive. It is given by \[M_{n}^{-1} = \frac{1}{\|Sg_{n}\|_{\alpha}^{2} - | \langle Sg_{n},g_{n}\rangle_{\alpha}|^{2}} \begin{pmatrix}
 	     \|Sg_{n}\|_{\alpha}^{2} & -\langle g_{n},Sg_{n}\rangle_{\alpha}  \\
 	    -\langle Sg_{n},g_{n}\rangle_{\alpha} & 1
 	    \end{pmatrix},\]  whose Frobenius norm is \[ \|M_{n}^{-1}\|_{F}^{2} = \frac{1}{(\|Sg_{n}\|_{\alpha}^{2} - | \langle g_{n},Sg_{n}\rangle_{\alpha}|^{2})^{2}} (\|Sg_{n}\|_{\alpha}^{4} + 2 |\langle Sg_{n},g_{n}\rangle_{\alpha}|^{2} + 1).\]
 In order to define $g_{n}$, fix a sequence $\{a_{n}\}_{n=0}^{\infty} \subset \C \setminus \overline{\D}$ such that $\lim_{n\rightarrow \infty} a_{n} = 1$. 
 We first prove the case $\alpha \leq 0$: for $n\in\N$, the function $g_{n}\in D_{\alpha}$ is given by the normalized reproducing kernel in $D_{\alpha}$
 		\[g_{n} = \frac{k_{1/a_{n}}}{\|k_{1/a_{n}}\|_{\alpha}}.\]
 		
 		Using the reproducing property we get that \[\|k_{1/a_{n}}\|_{\alpha}^{2} = \langle k_{1/a_{n}},k_{1/a_{n}}  \rangle_{\alpha} = k_{1/a_{n}}(1/a_{n}) = \sum_{k=0}^{\infty} \frac{|a_{n}|^{-2k}}{(k+1)^{\alpha}}.\]
We can also easily compute, from the definition of the kernel, the square of the norm $	\|Sk_{1/a_{n}}\|_{\alpha}^{2}$, which is equal to
\[\langle Sk_{1/a_{n}},Sk_{1/a_{n}} \rangle_{\alpha}
 		    = |a_{n}|^{2} \sum_{k=1}^{\infty} \frac{|a_{n}|^{-2k}}{k^{2\alpha}} (k+1)^{\alpha} \leq |a_{n}|^{2} \|k_{1/a_{n}}\|_{\alpha}^{2}.\]
 		Hence, we can estimate one of the elements of the matrix $M_n^{-1}$, \[\|Sg_{n}\|_{\alpha}^{2} = \langle Sg_{n},Sg_{n} \rangle_{\alpha}
 		    = \frac{\|Sk_{1/a_{n}}\|_{\alpha}^{2}}{\|k_{1/a_{n}}\|_{\alpha}^{2}} \leq |a_{n}|^{2}.\]
 		At this point, we only lack an estimate of $|\langle Sg_{n}, g_{n} \rangle|^{2}$, but we can actually obtain its exact value by using one more time the reproducing property of the kernel. Indeed, $|\langle Sg_{n}, g_{n} \rangle_{\alpha}|^{2}$ will be given by
\[ \frac{|\langle Sk_{1/a_{n}},k_{1/a_{n}} \rangle_{\alpha}|^{2}}{\|k_{1/a_{n}}\|_{\alpha}^{4}}= \frac{1}{|a_{n}|^{2}} \frac{ |k_{1/a_{n}}(1/a_{n})|^{2}}{\|k_{1/a_{n}}\|_{\alpha}^{4}}= \frac{1}{|a_{n}|^{2}}.\]
Therefore we can obtain a lower bound for the norm of $M_{n}^{-1}$,
 \[\|M_{n}^{-1}\|_{F}^{2} \geq \frac{1}{(|a_{n}|^{2} -  (1/|a_{n}|^{2}))^{2}}  (2 (1/|a_{n}|^{2}) + 1).\]
The right-hand side diverges to $\infty$ if $a_{n}\rightarrow 1$. Thus, taking $a_{n} =1 + \frac{1}{n}$ the result follows.
 		
 		We turn now to the case $\alpha >0$. Consider $f_{n}(z)= \frac{1}{1-z/a_{n}}$ and let  \[g_{n}=\frac{f_{n}}{\|f_{n}\|_{\alpha}}.\] Observe that \[ \|f_{n}\|_{\alpha}^{2}  = \sum_{k=0}^{\infty} |a_{n}|^{-2k} (k+1)^{\alpha}
            = |a_{n}|^{2} \sum_{k=1}^{\infty} |a_{n}|^{-2k} k^{\alpha} 
            = |a_{n}|^{2} \, {\rm Li}_{-\alpha} (|a_{n}|^{-2}),\]
 		where ${\rm Li}_{s} (z)$ is the polylogarithm of order $s$ and argument $z$.
 
An easy calculation shows that $\|Sf_{n}\|_{\alpha}^{2} = |a_{n}|^{2} (\|f_{n}\|_{\alpha}^{2}-1)$ and $\langle Sf_{n},f_{n} \rangle_{\alpha} = a_{n}(\|f_{n}\|_{\alpha}^{2}-1)$. Replacing these expressions in the definition of $g_{n}$ we obtain directly, 
 \[\|Sg_{n}\|_{\alpha}^{2} = |a_{n}|^{2} \left( 1- \frac{1}{\|f_{n}\|_{\alpha}^{2}} \right),\] and the very similar \[  |\langle Sg_{n},g_{n} \rangle_{\alpha}|^{2}    = |a_{n}|^{2} \left( 1- \frac{1}{\|f_{n}\|_{\alpha}^{2}}\right)^{2}.\]
The numerator in the expression for $\|M_n^{-1}\|_F^2$ is greater than $1$ for all $n\in \N$ and the denominator can be expressed in terms of $\|f_{n}\|^{2}_{\alpha}$ as
\[ |a_{n}|^{2} \left( 1- \frac{1}{ \|f_{n}\|_{\alpha}^{2}} \right) \frac{1}{\|f_{n}\|_{\alpha}^{2}} = \frac{1}{{\rm Li}_{-\alpha}(|a_{n}|^{-2})} \left( 1- \frac{1}{|a_{n}|^{2}{\rm Li}_{-\alpha}(|a_{n}|^{-2})}\right).\]	Since $k^{\alpha} \geq 1$ for all $k\geq 1$ and $\alpha>0$ we have \[ {\rm Li}_{-\alpha} (|a_{n}|^{-2}) = \sum_{k=1}^{\infty} |a_{n}|^{-2k} k^{\alpha}  \geq \sum_{k=1}^{\infty} |a_{n}|^{-2k} = \frac{|a_{n}|^{2}}{1-|a_{n}|^{-2}} = \frac{|a_{n}|^{4}}{|a_{n}|^{2}-1},\]
 which tends to $\infty$ if  $a_{n}\rightarrow 1$.
 This implies that $\|M_{n}^{-1}\|_{F}$ diverges as $n$ tends to $\infty$.
 		\end{proof}

{\bf Acknowledgements.} We thank an anonymous referee for careful reading of an earlier version of the manuscript and useful suggestions.

\end{document}